\newtheorem{thm}{Theorem}
\newtheorem{cor}[thm]{Corollary}
\newtheorem{lem}[thm]{Lemma}
\newtheorem{prop}[thm]{Proposition}
\theoremstyle{definition}
\newtheorem{defn}{Definition}
\newtheorem*{defn*}{Definition}
\theoremstyle{remark}
\newtheorem{remark}{Remark}
\newcommand{\norm}[1]{\|#1\|}
\newcommand{\abs}[1]{\left\vert#1\right\vert}
\newcommand{\Real}{\mathbb R}
\newcommand{\To}{\longrightarrow}
\def\argmin{\mathop{\rm argmin}}
\def\Cov{\mbox{Cov}}
\def\E{\mathbb{E}}
\def\diag{\mbox{diag}}
\def\r{\mathbf{r}}
\def\v{\mathbf{v}}
\def\u{\mathbf{u}}
\def\av{\mathbf a}
\def\cv{\mathbf c}
\def\ev{\mathbf e}
\def\rv{\mathbf r}
\def\vv{\mathbf v}
\def\xv{\mathbf x}
\def\yv{\mathbf y}
\def\Xv{\mathbf X}
\def\1v{\mathbf 1}
\def\0v{\mathbf 0}
\begin{document}

\begin{frontmatter}

\title{Inference on subspheres model for directional data}
\runtitle{CLT for subsphere}

\author{\fnms{Sungkyu} \snm{Jung}\ead[label=e2]{sungkyu@pitt.edu}}
\address{Department of Statistics, University of Pittsburgh, Pittsburgh, PA 15222, U.S.A.\\ \printead{e2}}



\runauthor{S. Jung}

\begin{abstract}
Modeling deformations of a real object is an important task in computer vision, biomedical engineering and biomechanics. In this paper, we focus on a situation where a three-dimensional object is rotationally deformed about a fixed axis, and assume that many independent observations are available. Such a problem is generalized to an estimation of concentric, co-dimension 1, subspheres of a polysphere. We formulate least-square estimators as    generalized Fr\'{e}chet means, and evaluate the consistency and asymptotic normality.
\end{abstract}

%



\end{frontmatter}

\section{Introduction}

This work is motivated by the study of rotational deformation of 3D objects. \cite{Schulz2012} proposed an estimation of rotational axis for 3D bodies whose deformation is modelled by directional vectors. The method of  \cite{Schulz2012} can be understood as fitting concentric circles.
Directional data in 3D lie in the unit sphere $S^{m} = \{ \xv \in \Real^{m+1}: \norm{\xv}_2 = 1 \}$ with $m = 2$. When a set of direction vectors is rotated by a rotation operator, the trajectories of the rotation form concentric circles.

A circle on $S^2$ is a set of equidistance points, parameterized by a center $\cv \in S^2$ and geodesic radius $r \in (0,\pi)$, and is $$[\cv,r] = \{\xv \in S^2 : \xv'\cv = \cos(r) \}.$$
The $K$-set of concentric circles is a collection of circles on $S^2$ with a common center $\cv$ and is
$$[\cv,\rv] = \{ (\xv_1,\ldots,\xv_K) \in (S^2)^K : \xv_j'\cv = \cos(r_j), j = 1,\ldots,K \},$$
for $\rv = (r_1,\ldots,r_K) \in (0, \pi)^K$. Figure \ref{fig:illustcircle} illustrates an example of $[\cv,r]$ and $[\cv,\rv]$.

For a data set $\{ \Xv_1,\ldots, \Xv_n\}$, $\Xv_i = (\xv_{i1},\ldots,\xv_{iK}) \in (S^2)^K$, the method in \cite{Schulz2012} fits $[\cv,\rv]$ by minimizing the sum of squared residuals. Let $\rho(\Xv_i, [\cv,\rv])$ measure the residual, then the estimate is
\begin{equation}\label{eq:estimation}
 [\cv_n,\rv_n] = \argmin_{[\cv,\rv]} \sum_{i=1}^n \rho^2(\Xv_i, [\cv,\rv]).
\end{equation}
The estimator is a generalized sample Fr\'{e}chet mean \citep{Huckemann2011}. An example of the fir $[\cv_n,\rv_n]$ in the special cases of $K = 1$ (left panel) and $K = 4$ (right panel) is plotted in Fig.~\ref{fig:illustcircle}.
Now let $[\cv_0,\rv_0]$ be the population version defined as,
\begin{equation}\label{eq:minimizerModel}
[\cv_0,\rv_0] = \argmin_{[\cv,\rv]} E \rho^2(\Xv, [\cv,\rv]).
\end{equation}

In this work, we investigate the large sample behavior of the estimator. In particular, we show that $ [\cv_n,\rv_n]$ is consistent estimator of $[\cv_0,\rv_0]$ and also that $ [\cv_n,\rv_n]$ is asymptotically normal, as summarized in the following proposition.

\begin{prop}
Suppose all assumptions in Theorem~\ref{thm:consistencyandnormality} are satisfied. Assume in adition that $[\hat\mu_n] = [\cv_n,\rv_n]$ and $[\mu_0] = [\cv_0,\rv_0]$ of (\ref{eq:estimation}) and (\ref{eq:minimizerModel}) exist and are unique. Then for a metric $d$ defined later in Section~\ref{sec:defineP},
\begin{enumerate}
\item[i)] $[\hat\mu_n]$ is strongly consistency with $[\mu_0]$ in the sense that
\begin{equation*}
    \lim_{n\to\infty} d([\hat\mu_n],[\mu_0])  = 0 \ \mbox{ with probability 1},
\end{equation*}
\item[ii)] For a mapping $\phi$ from the space of $[\hat\mu_n]$ to a vector space, i.e., $\phi([\hat\mu_n]) \in \Real^{\nu}$, with $\nu = m+K$, there exists a $\nu \times \nu$ matrix $A_\phi$ and a $\nu \times \nu$ covariance matrix $\Sigma_\phi$  such that
 \begin{equation*}
\sqrt{n} A_\phi \{\phi(\mu_n) -  \phi(\mu)\} \To N_\nu(0, \Sigma_\phi) \mbox{  in distribution as } n \to \infty.
\end{equation*}
\end{enumerate}
\end{prop}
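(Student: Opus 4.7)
The plan is to treat $[\hat\mu_n]$ as a generalized Fr\'{e}chet mean in the framework of \cite{Huckemann2011}, verifying the hypotheses required for consistency and asymptotic normality on the quotient parameter space $\Pc = (S^m \times (0,\pi)^K)/\sim$, where the equivalence collapses the antipodal redundancy $(\cv, \rv) \sim (-\cv, \pi\mathbf{1} - \rv)$, endowed with the metric $d$ introduced in Section~\ref{sec:defineP}.

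For part (i), I would first argue that the minimizers of $F([\mu]) = E\rho^2(\Xv, [\mu])$ can be confined to a compact subset of $\Pc$ (in fact $\Pc$ is essentially compact, since $S^m$ is compact and the radii can be restricted away from $0$ and $\pi$). Since $\rho^2(\Xv, \cdot)$ is continuous in $[\mu]$ and uniformly bounded on $(S^2)^K$, the strong law of large numbers applies pointwise, and a standard stochastic equicontinuity or bracketing argument upgrades this to the uniform SLLN
\begin{equation*}
\sup_{[\mu] \in \Pc} \bigl| F_n([\mu]) - F([\mu]) \bigr| \to 0 \ \mbox{ almost surely.}
\end{equation*}
Combined with the assumed uniqueness of $[\mu_0]$, this yields $d([\hat\mu_n], [\mu_0]) \to 0$ almost surely by the usual argmin-continuity argument: any subsequential limit of $[\hat\mu_n]$ must minimize $F$, hence coincide with $[\mu_0]$.

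For part (ii), I would work in a local chart $\phi\colon U \to \Real^\nu$ on a neighborhood $U$ of $[\mu_0]$ in $\Pc$, normalized so that $\phi([\mu_0]) = 0$; the dimension count $\nu = m + K$ is exactly $m$ for the center on $S^m$ plus $K$ for the radii. By part (i), $[\hat\mu_n] \in U$ eventually, so we may set $\theta_n = \phi([\hat\mu_n])$ and $\psi(\xv, \theta) = \rho^2(\xv, \phi^{-1}(\theta))$. After verifying that $\rho^2$ is twice continuously differentiable in $\theta$ on $U$ and that the usual integrability conditions hold, the first-order condition $\sum_i \nabla_\theta \psi(\Xv_i, \theta_n) = 0$ and a Taylor expansion about $\theta_0 = 0$ give
\begin{equation*}
\sqrt{n}\, A_\phi\, (\theta_n - \theta_0) = -\frac{1}{\sqrt{n}} \sum_{i=1}^n \nabla_\theta \psi(\Xv_i, \theta_0) + o_P(1),
\end{equation*}
with $A_\phi = E[\nabla^2_\theta \psi(\Xv, \theta_0)]$. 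Applying the multivariate central limit theorem to the score, whose covariance is $\Sigma_\phi = \Cov(\nabla_\theta \psi(\Xv, \theta_0))$, delivers the claimed normal limit.

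The main obstacle I anticipate is choosing a chart $\phi$ that behaves well under the antipodal quotient and that makes $\rho^2 \circ \phi^{-1}$ explicit enough to control its Hessian, and then establishing nondegeneracy of $A_\phi$ at $[\mu_0]$. Nondegeneracy of $A_\phi$ is the local, second-order counterpart of the uniqueness assumption on $[\mu_0]$: it amounts to showing that the Fr\'{e}chet functional has a genuine strict local minimum (not merely a unique minimizer) at the true concentric subspheres, and I expect this to require a careful geometric analysis of how small perturbations of the center $\cv_0$ and the radii $\rv_0$ affect the squared residuals simultaneously.
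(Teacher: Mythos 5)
Your proposal is correct in substance but follows a genuinely different route from the paper. For consistency, you run the classical Wald-type M-estimation argument: a uniform strong law of large numbers for $F_n([\mu])=n^{-1}\sum_i\rho^2(\Xv_i,[\mu])$ over the (essentially compact) quotient space, followed by argmin-continuity. The paper instead stays inside the generalized Fr\'{e}chet-mean framework: it first establishes existence of the population minimizer set $E$ directly (using compactness of $\overline{P_0}$ and non-degeneracy of $X$ to keep the radii in the open interval $(0,\pi)$), then invokes Theorem A.3 of \cite{Huckemann2011} for Ziezold consistency, and proves its own Theorem~\ref{thm:ZC-BPC} showing that Ziezold consistency upgrades to Bhattacharya--Patrangenaru consistency when $P_0$ is totally bounded. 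The paper's route has the advantage of handling set-valued minimizer sets $E$ and $E_n$ without any uniqueness assumption, and of reusing machinery built for quotient/pseudo-metric spaces; your route is more elementary and self-contained, but you should make explicit the two points you gloss over: (a) the sample minimizers must be shown not to escape to the boundary $r_j\in\{0,\pi\}$ (the paper's non-degeneracy argument, $F_j(\cv,0)>\inf F$, does exactly this), and (b) the uniform SLLN needs a Lipschitz or bracketing bound on $\rho^2(\xv,\cdot)$, which holds here because each distance in Table~\ref{tab:lossfunctions} is uniformly Lipschitz in $(\cv,\rv)$ on the compact sample space. For asymptotic normality, your Taylor expansion of the estimating equation is precisely the content of Theorem 6 of \cite{Huckemann2011CLT}, which the paper cites rather than re-derives, and your identifications $A_\phi=\E\{H\rho^2(X,\mu)\}$ and $\Sigma_\phi=\Cov\{\nabla\rho^2(X,\mu)\}$ match the paper's. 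Two remarks: the twice-differentiability you defer to ``verification'' is exactly where assumption (A1) enters for $\rho_I$, $\rho_E$ and $\rho_N$ (only the slicing loss is smooth everywhere), so you should name it; and note that the theorem's conclusion keeps $A_\phi$ on the left of $\phi(\mu_n)-\phi(\mu)$, so invertibility of $A_\phi$ is needed for your expansion to produce a tight $O_P(n^{-1/2})$ rate, as you correctly anticipate, but is not needed merely to state the limit in the given form.
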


%
%

\begin{figure}[tb!]
\centering
\ifpdf
\vskip -1cm
      \includegraphics[width=1\textwidth]{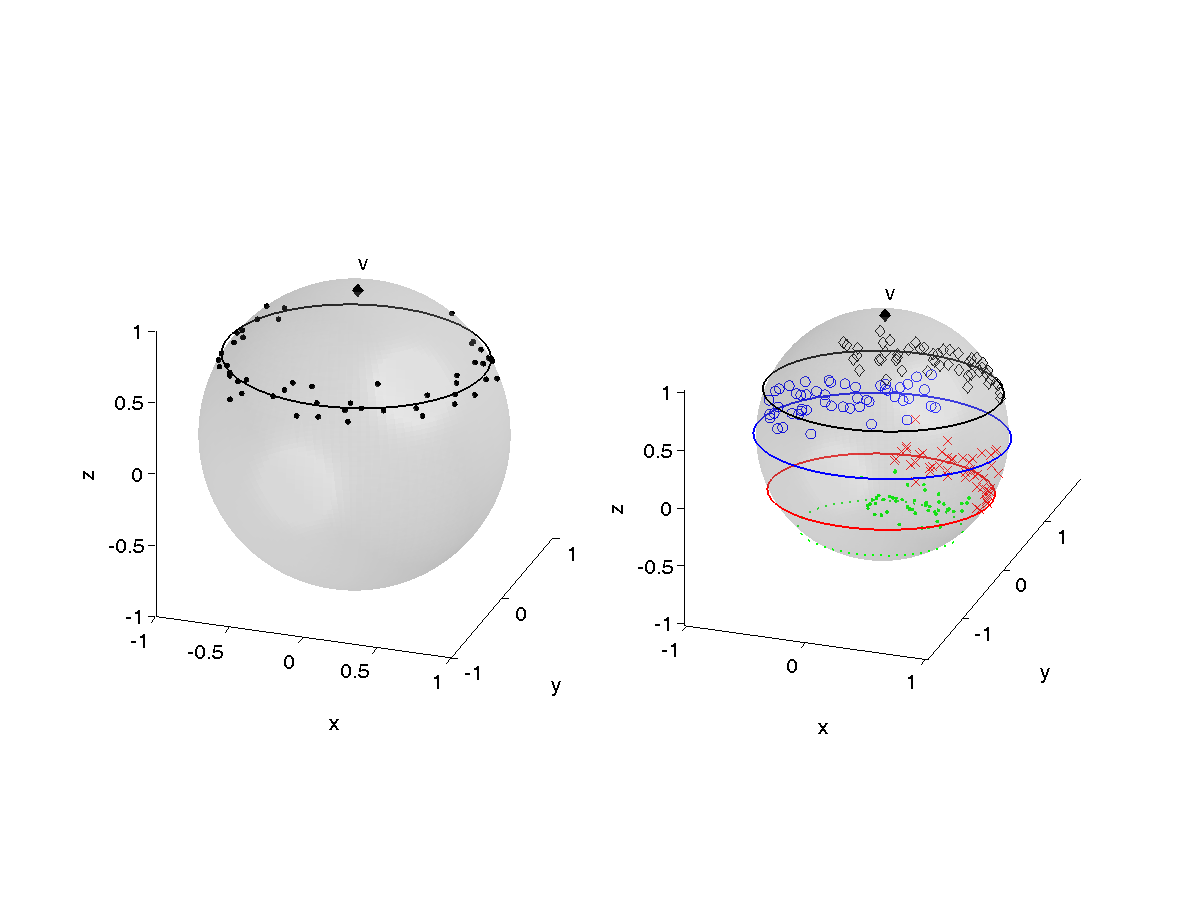}
\vskip -1cm
\else
      \includegraphics[width=1\textwidth]{illustcircle.ps}
\fi
  \caption{A circle on $S^2$ (left) and a set of concentric circles (right)}\label{fig:illustcircle}
\end{figure}

The problem of fitting $[\cv,r]$ on $S^2$ can be generalized to fitting a subsphere $[\cv,r]$, $\cv \in S^{m}$, on an $m$-dimensional unit sphere, $m \ge 2$. Such a problem is relevant to a backward dimension reduction of directional and shape data \citep{JungPNS}. We present our results in the general $m$-sphere case.

The implication of our work lies in providing the large sample confidence interval for rotation axes $\cv$ and a large sample hypothesis test for rotation axes. Suppose we have estimates of $A_\phi$ and $\Sigma_\phi$. Let $\hat{A}_\phi \to A_\phi$ and $\hat\Sigma_\phi \to \Sigma_\phi$ in probability as $n \to \infty$. Then by Slutsky's theorem and Cramer-Wold device, we still have asymptotic normality.

Since the estimate $\cv$ of the axis of rotation is on a curved surface $S^{m}$, we rely on the asymptotic theory developed for Fr\'{e}chet mean on general pseudo-metric spaces, following \cite{Ziezold1977,Bhattacharya2003,Huckemann2011,Huckemann2011CLT}.

\section{Fitting concentric circles}
The problem of fitting concentric circles (or subspheres) for $\Xv_1, \ldots, \Xv_n \in (S^{m})^K$, $\Xv_i = (\xv_{i1},\ldots,\xv_{iK})$, can be formulated as a general form of optimization problem that minimizes
$$
F_{L}(\cv,\rv) = \frac{1}{nK}\sum_{i=1}^n \sum_{j=1}^K  L(\cv, r_j | \xv_{ij}), \quad \cv \in S^{m}, 0 < r_j \le \pi/2,
$$
where $L$ is a loss function. Different forms of the loss function $L$ leads to different notions of residuals from $[\cv,\rv]$ to $\xv \in S^{m}$. The loss functions considered here are relevant to squared distances on $S^{m}$. The original optimization problem of \cite{Schulz2012} is given by the squared geodesic distance $L(\cv,r|x) = \{\arccos(\langle \cv, x \rangle ) - r\}^2$.

We now give a list of loss functions we consider. In preparation, we define the following. See also the illustration in Fig.~\ref{fig:AllDistances}. The geodesic distance between two points $\xv,\yv \in S^{m}$ is defined by the arc length of the shortest geodesic segment connecting $\xv$ and $\yv$ and is
$$\rho_I(\xv,\yv) = \arccos(\xv'\yv) \in [0, \pi].$$
The geodesic distance is often called the intrinsic distance. On the other hand, the extrinsic distance is defined by the Euclidean distance between $\xv$ and $\yv$ in the embedding $\Real^{m+1}$ of $S^{m}$ and is
$$\rho_E (\xv,\yv) = \norm{ \xv - \yv}_2 \in [0, 2].$$
The distance between a point $\xv$ and a set $A \subset S^{m}$ is defined by the shortest distance between $\xv$ and members of $A$,
$$ \rho(\xv, A) = \inf_{\yv \in A} \rho_I(\xv, \yv), \ \rho_E(\xv, A) = \inf_{\yv \in A} \rho_E(\xv, \yv).$$
When the set $A$ is a subsphere, $A = [\cv,r]$, precise expressions for the distances are available through the projection.
The projection $P_{[\cv,r]}\xv$ of $\xv$ onto $[\cv,r]$ is the point in $[\cv,r]$ given by
\begin{align}
P_{[\cv,r]}\xv & = \argmin_{\yv \in [\cv,r] } \rho_I(\xv, \yv)  \nonumber \\
               & = \argmin_{\yv \in [\cv,r] } \rho_E(\xv, \yv) \nonumber \\
               & = \cos(r) \cv + \sin(r) \av, \nonumber
\end{align}
where $\av = ( \cv - (\cv^T\xv)\xv) / \norm{\cv - (\cv^T\xv)\xv}$. Therefore, we have
$\rho_I(\xv, [\cv,r]) = \rho_I(\xv,P_{[\cv,r]}\xv) =  \abs{\rho_I(\xv,\cv) - r} = \abs{ \arccos(\xv^T\cv) - r},$
and
$\rho_E(\xv, [\cv,r]) = \norm{ x - P_{[\cv,r]}x} $.

We consider intrinsic loss, extrinsic loss, slicing loss, and naive extrinsic loss functions, summarized in Table~\ref{tab:lossfunctions}. These loss functions are all squared distances, which we discuss next. See also Fig.~\ref{fig:AllDistances}.

\begin{table}[tb]
\centering
\begin{tabular}{ccl}
Name  & Symbol & Associated distance \\
\hline
Intrinsic squared loss & $L_I = \rho_I^2 $   &  $\rho_I(\xv,[\cv,r]) = \abs{\rho_I(\xv,\cv) - r}$ \\
Extrinsic squared loss & $L_E = \rho_E^2$  &  $\rho_E(\xv,[\cv,r]) = \norm{ x - P_{[\cv,r]}x}_2$\\
Naive extrinsic squared loss & $L_{N} = \rho_N^2$     &  $\rho_N(\xv, [\cv,r]) = \abs{ \norm{\xv - \cv}_2 - r_E}$\\
Slicing squared loss   & $ L_{S} = \rho_S^2$& $\rho_S(\xv, [\cv,r]) = \frac{1}{2}\abs{ \norm{\xv - \cv}^2_2 - r^2_E} $
\end{tabular}
\caption{List of loss functions and their associated distance functions. Here $r_E = {2} \sin(r / 2)$.}
\label{tab:lossfunctions}
\end{table}

When the residuals are measured by the geodesic distance, we have \emph{intrinsic squared loss} function $L_I(\cv,r | \xv) = \rho_I^2(\xv, [\cv,r])$, which is the squared intrinsic distance between $\xv$ and its projection on $[\cv,r]$. The \emph{extrinsic squared loss} function is obtained when the residuals are measured by the extrinsic distance, and is
\begin{eqnarray*}
 L_E(\cv,r | \xv) = \rho_E(\xv, [\cv,r])^2
  &=&  2 \sin \{ \frac{\rho_I(\xv ,P_{[\cv,r]}\xv)}{2} \} \\
  & = &1- 2\cos ( \rho_I(\xv,\cv) - r) \\
  &=&  2 - 2 ( \xv'\cv \cos(r) + \sqrt{1 - (\xv'\cv)^2} \sin(r)).
\end{eqnarray*}
The extrinsic and intrinsic loss functions are closely related, by $L_E(\cv,r | \xv) =  L_I(\cv,r | \xv)/2 + O(  L_I(\cv,r | \xv)^2 ) \}$.
Next, consider a rank $m$ hyperplain $V$ in $\Real^{m+1}$ spanned by elements of $[\cv,r] \subset \Real^{m+1}$. Then $V = \{\yv \in \Real^{m+1} : \cv^T \yv   - \cos(r) = 0\}$. A useful view of the subsphere fitting is  understanding the fit $[\cv,r]$ as a slicing of the sphere $S^{m}$ by the hyperplain $V$. This leads to a simple definition of residual. Denote $\xv^\prime$ the orthogonally projected $\xv$ onto the affine hyperplain. We have $ \xv^\prime =  \xv - (\cv^T\xv - \cos(r)) \cv$, which leads to the definition of \emph{slicing squared loss}
$$ L_{S}(\cv,r | \xv) = \norm{ \xv- \xv^\prime}^2 = (\cv^T\xv  - \cos(r))^2,$$
which is understood as the residual of $\xv$ in the embedded space $\Real^{m+1}$ when slicing the sphere with the affine hyperplain $V$.
Lastly, \emph{naive extrinsic squared loss} is given by replacing the geodesic distance $\rho_I$ with $\rho_E$ in $L_I(\cv,r | \xv) = (\rho_I(\xv,\cv) - r)^2$. Also replacing $r$ with its extrinsic counterpart $r_E = {2} \sin(r / 2)$, the naive extrinsic squared loss is
$$L_{N}(\cv,r | \xv) =  (\norm{\cv - \xv} - {2} \sin(r / 2) )^2.$$
One can further alternate $L_N$ by measuring the square of squared extrinsic distances, i.e.,
$L_0(\cv,r | \xv) =  (\norm{\cv - \xv}^2 - r_E^2 )^2.$ It turns out that the loss function $L_0$ is equivalent to the slicing squared loss, that is, $ L_S(\cv,r | \xv) = \norm{\xv - \xv^\prime}^2 = (\cv^T\xv  - \cos(r))^2 = \frac{1}{4}L_0(\cv,r | \xv), $
which in turn leads to a definition of slicing distance
$$\rho_S(\xv,[\cv,r]) = \frac{1}{2} \abs{\norm{\cv - \xv}^2 - r_E^2} = \frac{1}{2} (\norm{\cv - \xv} - r_E^2)(\norm{\cv - \xv} + r_E^2).$$
\begin{figure}[tb!]
\centering
\ifpdf
      \includegraphics[width=1\textwidth]{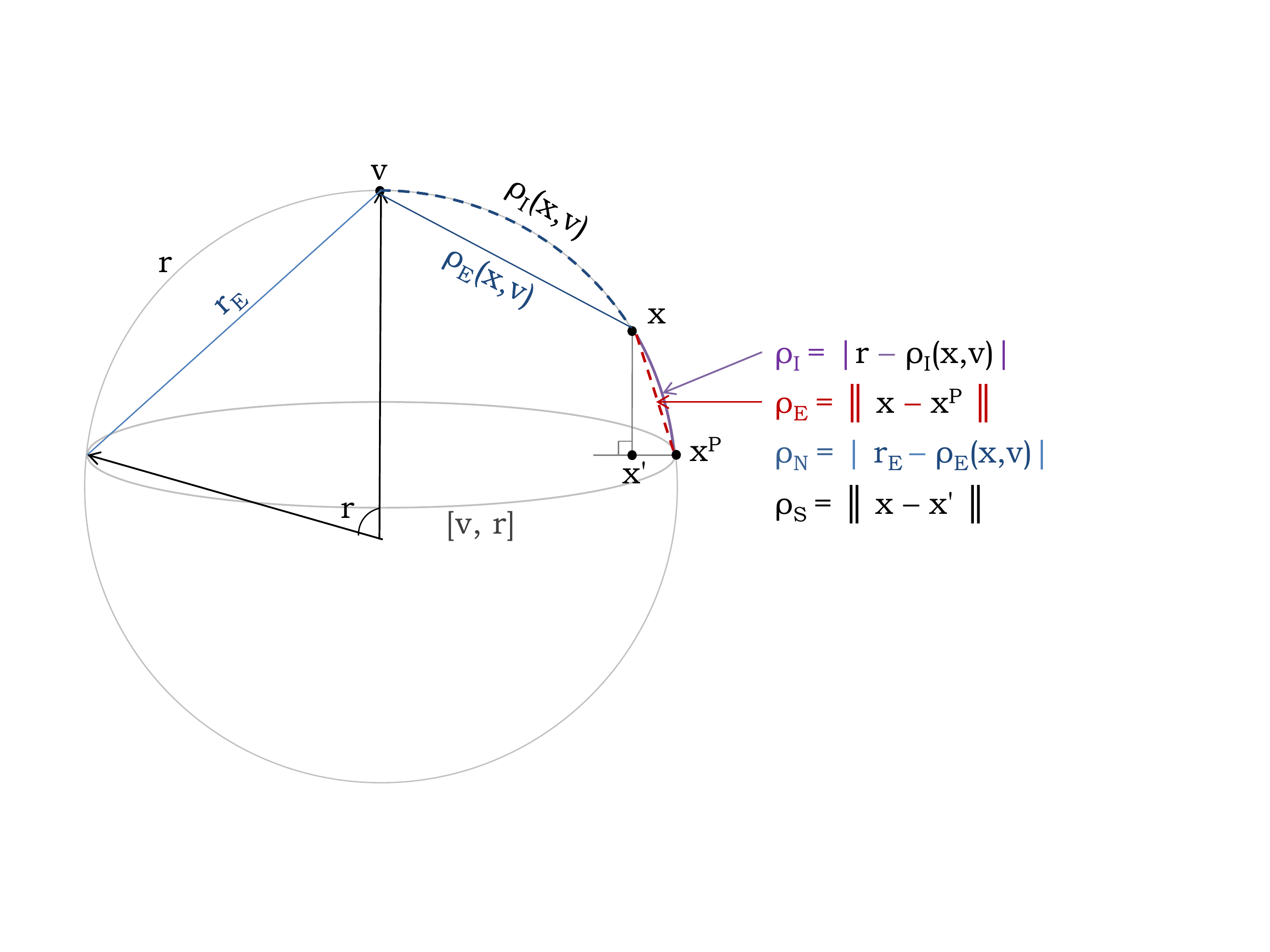}
      \vskip -4em
\else
\fi
  \caption{All distances illustrated.}\label{fig:AllDistances}
\end{figure}

\begin{remark}
The distance functions used in the above loss functions are all \emph{equivalent} in the sense that whenever $\rho_{\imath}(x,y) \to 0$, $\rho_{\jmath}(x,y) \to 0$.
\end{remark}

\begin{remark}
When the slicing squared loss is used, the optimization task reduces to the usual eigenvalue problem. Specifically, consider the following minimization problem:
$$
F(\cv,r) = \frac{1}{n}\sum_{j=1}^n L_S(\cv,r | \xv_j) = \frac{1}{n}\sum_{j=1}^n (\cv^T\xv_j  - \cos(r))^2,
$$
for $(\cv,r) $ satisfying $\cv^T\cv = 1$, $r \in (0,\pi)$.
Denote for simplicity $b = \cos(r)$. Then using the $ (m+1) \times n$ matrix of data points $\mathbf{X} = [\xv_1, \ldots, \xv_n]$, we have
$$ F(\cv,b) = (\cv^T \mathbf{X} - b 1_n^T)(\cv^T \mathbf{X} - b 1_n^T)^T,$$
where $1_n$ denotes the column vector of size $n$ with all elements 1.
Given any $v$, we have the minimizer $\hat{b} =  \frac{1}{n}\sum_{j=1}^n v^Tx_j = \frac{1}{n}v^T\mathbf{X} 1_n$, which leads to
$$ F_1(v) = v^T\mathbf{X}( I_n - \frac{1}{n}1_{n}1_{n}^T)\mathbf{X}^T v := v^T S_X v,$$
where $S_X$ is the sample covariance matrix of the embedded $\xv_j$. Notice that the minimizer $\hat{\cv}$ of $F_1$ with the constraint $\cv^T\cv = 1$ is the same as the eigenvector of $S_X$ corresponding to the smallest eigenvalue. The solution $\hat{\cv}$ also satisfies $\hat{\cv}^T\hat{\cv} = 1$ and thus leading to $\hat{b} = \frac{1}{n}\sum_{j=1}^n \hat{\cv}^T\xv_j \in [-1, 1]$.

Moreover, when the problem is restricted to the case $r = \pi/2$, the corresponding solution $\cv$ is also obtained by a eigen-decomposition. This is because $r = \pi/2$ gives $b = 0$, and thus the corresponding $\hat{\cv}$ is the eigenvector of $ \mathbf{X} \mathbf{X}^T$ corresponding to the smallest eigenvalue.
\end{remark}

We can now reformulate the estimation procedure using the distance functions. Let $\rho_{S^{m}}$ be either $\rho_I$, $\rho_E$, $\rho_S$ or $\rho_N$.
Then the general problem we consider is to minimize the following function over $\cv \in S^{m}, 0 < r_j \le \pi/2$,
\begin{eqnarray}\label{eq:problem}
F_{\rho_{S^{m}}}(\cv,\rv) = \frac{1}{nK}\sum_{i=1}^n \sum_{j=1}^K  \rho_{S^{m}}^2(\cv, r_j | \xv_{ij}),
\end{eqnarray}
or to find a minimizer $[\cv,\rv]$ of $F_{\rho_{S^{m}}}([\cv,\rv]) := F_{\rho_{S^{m}}}(\cv,\rv)$ among a collection of subspheres.

In the next section we evaluate the asymptotic properties of estimate, $[\cv_n,\rv_n] = \argmin F_{\rho_{S^{m}}}([\cv,\rv])$, compared to the population counterpart $[\cv_0,\rv_0]$.

\begin{remark}
We have four distances (related to the loss functions): Intrinsic (geodesic) distance, extrinsic distance, slicing distance and naive extrinsic distance. Among these only slicing distance function $\rho_I(\xv,[\cv,r])$ is smooth in the second argument for all $\cv \in S^{m}$. Other choices are not. As shown in Fig.~\ref{fig:Illustdistances}, other distance functions are not smooth at $\cv = \xv$ or at $-\xv$. In such a case we will make a special assumption about the random variable $\Xv \in (S^{m})^K$.
\begin{itemize}
    \item[(A1)] There exists $\epsilon > 0 $ such that $P( \Xv \in \cup_{j=1}^K \{ X = (\xv_1,\ldots,\xv_K)  : \rho_I(\xv_j, \cv_0) < \epsilon \mbox{ or } \rho_I(\xv_j, -\cv_0) < \epsilon, \xv_j \in S^{m} \} ) = 0$.
\end{itemize}
In other words, we assume that there is no observation near the true axis $\cv_0$.
Then all distance functions are smooth in the second argument for $\cv$ in the $\epsilon$-neighborhood of $\cv_0$, which leads to the asymptotic normality at $\mu = [\cv_0, r_0]$.
\begin{figure}[tb!]
\centering
\ifpdf
      \includegraphics[width=0.7\textwidth]{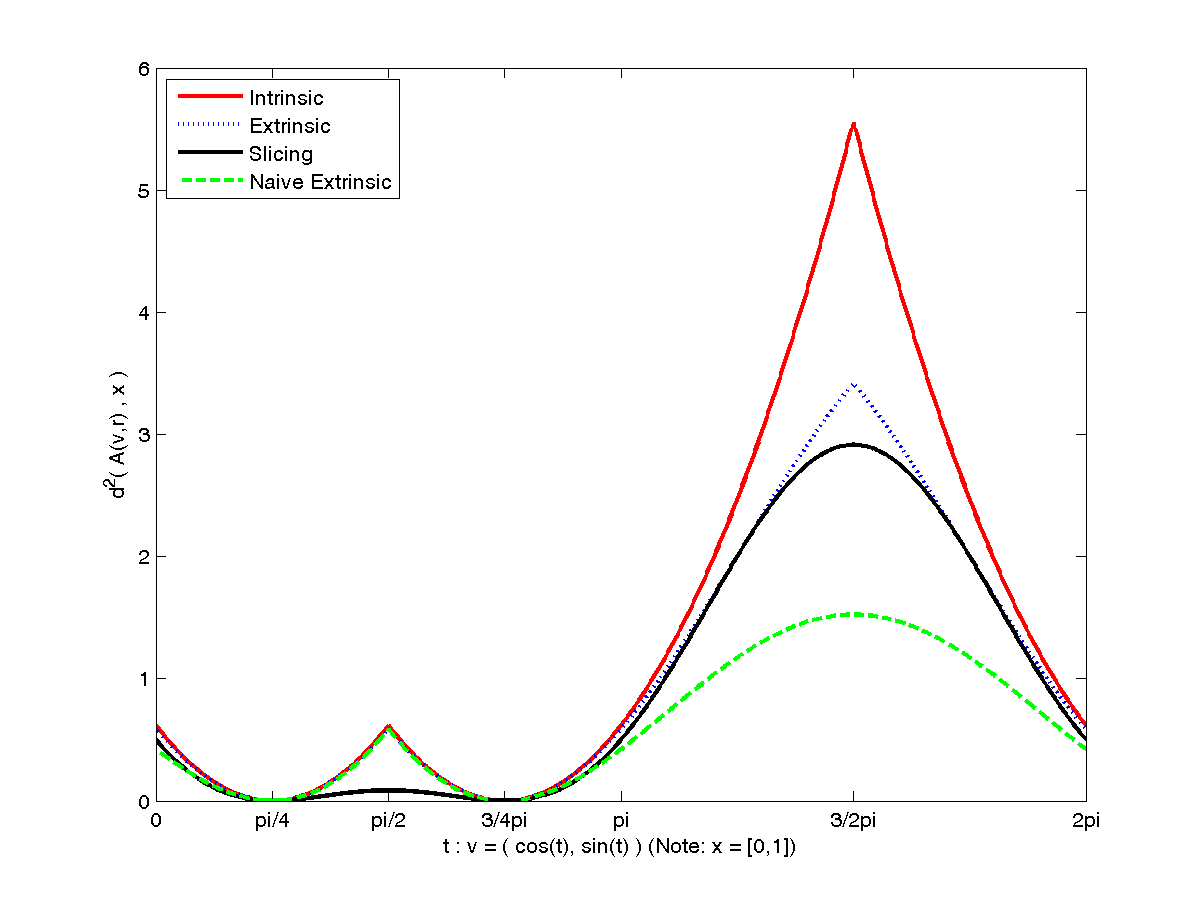}
\else
      \includegraphics[width=0.7\textwidth]{Illustdistances}

\fi
  \caption{The slicing distance function is smooth. Squared distances for different values of $\cv$, with fixed $r  = \pi/4$ and $x = [0,1]'$ are plotted here.}\label{fig:Illustdistances}
\end{figure}
\end{remark}

\section{Main Results}

In this section, we study large sample behaviors of the least-squares estimator (\ref{eq:estimation}). In particular, a consistency of the estimator $(\hat{\cv}, \hat{\r})$ with the population Fr\'{e}chet $\rho$-mean $(\cv,\r)$ defined in (\ref{eq:minimizerModel}) and an asymptotic normality of the estimator will be evaluated.

Let $\Xv,\Xv_1,\Xv_2,\ldots$ be i.i.d. random elements on the product of unit $d$-sphere $(S^{m})^K$. Each random element $\Xv$ or $\Xv_i$ is a mapping from some probability space $(\Omega, \mathcal{F}, \mathcal{P})$ to $(S^{m})^K$ equipped with its Borel $\sigma$-field. A distance function $\rho_{S^{m}}$ naturally leads to the product metric $\rho$ defined on $(S^{m})^K$,
$\rho^2(\Xv, [\cv, \rv]) = \frac{1}{K} \sum_{j = 1}^K \rho_{S^{m}}^2(\xv_j, [\cv,r_j] )$ for $\Xv = (\xv_1,\ldots,\xv_K) \in (S^{m})^K$.
%
%
%
%
%



The parameter pair $(\cv,\r)$ represents a $K$-set of concentric circles in $S^2$ (or concentric spheres when $d > 2$).
 Note that $(\cv,\r)$ and $(-\cv,\pi-\r)$ represent the same set of concentric circles, i.e. $[\cv,\r] = [-\cv, \pi-\r]$. Here we have used a convention that $\pi-\r = (\pi-r_1,\ldots, \pi-r_K)'$. In order to provide a convergence of   $(\hat{\cv}, \hat{\r})$ to $(\cv,\r)$, a distance function between two concentric circles will be first defined.

\subsection{The set of $[\cv,\rv]$}\label{sec:defineP}
We begin with a definition of $P= P(m,K)$, the set of all concentric circles (or spheres) in $S^m$  for any fixed $2 \le m < \infty$, $K \in \mathbb{N}$. Recall that a circle in the unit 2-sphere may be identified with a center $\cv \in S^2$ and a radius $r_j \in (0,\pi)$. Likewise, a subsphere in the unit $m$-sphere is represented by a pair $(\cv,r_j)$. Denote by $P_0$ the space of the center-radii pair $(\cv,\rv)$ as
\begin{equation*}
  P_0 = \{ (\cv,\rv) | c \in S^m,  r_j \in (0,\pi), j = 1,\ldots,K\} = S^m \times (0,\pi)^K,
\end{equation*}
whose dimension is $m+K$.
Since both $(\cv,\rv)$ and $(-\cv, \pi-\r)$ represent the same concentric circles, define an equivalence relation $\sim$ such that for $(\cv_1,\r^1), (\cv_2,\r^2) \in P_0$,
$(\cv_1,\r^1) \sim (\cv_2,\r^2)$ if and only if $(\cv_2,\r^2) \in [\cv_1,\r^1] = \{ (\cv_1,\r^1), (-\cv_1, \pi-\r^1) \}$, where $[\cv_1,\r^1]$ denotes the equivalence class of $(\cv_1,\r^1)$.  Then $P$ is defined as a quotient set of $P_0$ with respect to the binary relation $\sim$, i.e., $P = \{[\cv,\r] | (\cv,\rv) \in P_0 \}=P_0/ \sim$. For any $(\cv,\rv) \in  P_0$, $[\cv,\r] \in P$, and the dimension of $P$ is $m+K$.

We construct $d: P \times P \to [0,\infty)$ as a distance function.
For $[p_1]=[\cv_1,\r^1], [p_2] = [\cv_2,\r^2] \in P$, let $$d([p_1],[p_2]) = \min\{d_1(p_1,p_2),d_2(p_1,p_2)\},$$
where
\begin{eqnarray*}
 d_1(p_1,p_2) &=& \sqrt{\arccos(\cv_1'\cv_2)^2 + \| \r^1-\r^2\|^2 }, \\
 d_2(p_1,p_2) &=& \sqrt{\arccos(-\cv_1'\cv_2)^2 + \| \pi-\r^1-\r^2\|^2 }.
\end{eqnarray*}
We can also define $d: P_0 \times P_0 \to [0 ,\infty)$, $d(p_1,p_2) = d([p_1],[p_2])$, as a distance function on $P_0$.
Note that $\arccos(\cv_1'\cv_2) = d_g(\cv_1,\cv_2)$ is the minimal angle to rotate $\cv_2$ onto $\cv_1$, and, for the $j$th circle, $\abs{r^1_j-r^2_j}$ is the difference of the radii between two circles. Therefore, $d_1$ may be understood as the amount of energy to deform $(\cv_2,\rv^2)$ onto $(\cv_1,\rv^1)$. Likewise, $d_2$ is the amount of energy to deform $(-\cv_2, \pi-\rv^2)$  onto $(\cv_1,\rv^1)$, so that $d([p_1],[p_2])$ is the minimal amount of energy to deform $[p_2]$ onto $[p_1]$. The following result is required for the asymptotic theory in the next section.

\begin{lem}\label{lem:metric}
The distance function $d$ on $P$, or on $P_0$, is a metric or a pseudo-metric, respectively.
\end{lem}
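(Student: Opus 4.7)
The plan is to recognize that $d_1$ and $d_2$ are two instances of a single product metric $D$ on $P_0=S^m\times(0,\pi)^K$, and that $d$ is then the standard quotient (pseudo-)metric associated with the $\mathbb{Z}_2$-action generated by $\tau(\cv,\rv)=(-\cv,\pi\mathbf 1-\rv)$. Concretely, writing $d_g(\cv_1,\cv_2)=\arccos(\cv_1^\top\cv_2)$ for the geodesic distance on $S^m$, I would set
\[
D(p_1,p_2)=\sqrt{d_g(\cv_1,\cv_2)^2+\|\rv^1-\rv^2\|^2},
\]
and verify, via the Minkowski inequality in $\mathbb{R}^2$ combined with the metric properties of $d_g$ and $\|\cdot\|$, that $D$ is a genuine metric on $S^m\times\mathbb{R}^K$. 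Because $d_g(-\cv_1,-\cv_2)=d_g(\cv_1,\cv_2)$ and $\|(\pi\mathbf 1-\rv^1)-(\pi\mathbf 1-\rv^2)\|=\|\rv^1-\rv^2\|$, the map $\tau$ is an involutive isometry of $D$, and a direct calculation gives $d_1(p_1,p_2)=D(p_1,p_2)$ and $d_2(p_1,p_2)=D(\tau p_1,p_2)=D(p_1,\tau p_2)$. Consequently
\[
d([p_1],[p_2])=\min_{q_1\in[p_1],\,q_2\in[p_2]} D(q_1,q_2),
\]
so in particular $d$ is well defined on the quotient.

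From this representation, non-negativity, symmetry, and $d([p],[p])=0$ are immediate from the same properties of $D$. For the identity of indiscernibles on $P$, if $d([p_1],[p_2])=0$ then $D(q_1,q_2)=0$ for some pair of representatives, whence $q_1=q_2$ and $[p_1]=[p_2]$. The failure on $P_0$ is exhibited by any point: since $\cv\neq-\cv$ on $S^m$, every $p\in P_0$ satisfies $p\neq\tau p$, yet $d(p,\tau p)\le d_2(p,\tau p)=D(\tau p,\tau p)=0$, so $d$ on $P_0$ is only a pseudo-metric.

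The principal obstacle, and the only nontrivial step, is the triangle inequality $d([p_1],[p_3])\le d([p_1],[p_2])+d([p_2],[p_3])$. I would fix minimizers $(q_1^*,q_2^*)\in[p_1]\times[p_2]$ and $(\tilde q_2,q_3^*)\in[p_2]\times[p_3]$ realizing the two right-hand distances. If $q_2^*=\tilde q_2$, the triangle inequality for $D$ applied to $q_1^*,q_2^*,q_3^*$ gives the bound at once. Otherwise $\tilde q_2=\tau q_2^*$, and the $D$-isometry of $\tau$ lets me rewrite $D(\tilde q_2,q_3^*)=D(\tau q_2^*,q_3^*)=D(q_2^*,\tau q_3^*)$, and then
\[
d([p_1],[p_3])\le D(q_1^*,\tau q_3^*)\le D(q_1^*,q_2^*)+D(q_2^*,\tau q_3^*)=d([p_1],[p_2])+d([p_2],[p_3]),
\]
where the first inequality uses $\tau q_3^*\in[p_3]$. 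The claim for $P_0$ follows because $d(p_1,p_2):=d([p_1],[p_2])$ inherits every property already verified on $P$ except the identity of indiscernibles.
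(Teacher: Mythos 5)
Your proof is correct, and it reorganizes the argument in a genuinely different way from the paper. The paper proceeds by brute force: assuming without loss of generality that $d_1(p_1,p_2)\le d_2(p_1,p_2)$, it verifies the four triangle inequalities $d_1(p_1,p_2)\le d_\imath(p_1,p_3)+d_\jmath(p_3,p_2)$, $\imath,\jmath\in\{1,2\}$, each by an explicit computation that combines the spherical triangle inequality for $\arccos(\cv_1'\cv_2)$ with the Minkowski inequality in $\Real^2$. You instead identify the structure at play: $d_1=D$ is the standard product metric on $S^m\times(0,\pi)^K$, the antipodal map $\tau(\cv,\rv)=(-\cv,\pi\mathbf 1-\rv)$ is an involutive $D$-isometry preserving $P_0$, $d_2(p_1,p_2)=D(p_1,\tau p_2)$, and hence $d$ is exactly the quotient (pseudo-)metric of $D$ under the $\mathbb{Z}_2$-action, with the minimum over the four pairs of representatives collapsing to $\min\{d_1,d_2\}$ by the isometry property. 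The triangle inequality then follows from the general quotient-metric argument (using the isometry to transport the mismatch at the middle point onto $p_3$), which replaces the paper's four-case check with a single two-case step. The shared mathematical content is the same --- the fact that $D$ itself satisfies the triangle inequality still rests on the spherical triangle inequality plus Minkowski, exactly the paper's display (\ref{eq:triangleinequality1}) --- but your packaging is more structural and would generalize immediately to any finite group acting by isometries, while the paper's version is self-contained and fully explicit. Your treatment of positivity, the identity of indiscernibles on $P$, and its failure on $P_0$ (via $d(p,\tau p)=0$ with $p\neq\tau p$) matches the paper's.
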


A proof is given in Section~\ref{sec:technic}.

\subsection{Consistency and asymptotic normality}

A consistency of the estimator $\hat{p} = (\hat{\cv}, \hat{\r}) \in P_0$ with a parameter $p \in P_0$ can be evaluated by showing that $\hat{p}$ approaches $p$ with respect to the distance $d$ as $n \to \infty$, which ensures that $\lim_{n\to\infty}d([\hat{p}], [p]) = 0$. On the other hand, there are at least two global minimizers in $P_0$ because of the equivalence relation. Moreover, there may be non-unique solutions to the Fr\'{e}chet $\rho$-means of (\ref{eq:minimizerModel}) and (\ref{eq:estimation}), even in $P$. To accommodate such general situations, define the sets of minimizers of  (\ref{eq:minimizerModel})  and (\ref{eq:estimation}) as follows.

The population Fr\'{e}chet $\rho$-means of $X$ in $P_0$ is denoted by
$$E = \left\{   \mu \in P_0   :   \E\{\rho^2(X,\mu)\} = \inf_{p\in P_0} \E\{\rho^2(X,p)\}\right\} \subset P_0,
$$
and the set of sample Fr\'{e}chet $\rho$-means of $X$ is denoted by
$$E_n =\left\{  \mu \in P_0   : \sum_{j=1}^n \rho^2(X_j,\mu) = \inf_{p\in P_0} \sum_{j=1}^n\rho^2(X_j,p) \right\} \subset P_0.
$$
 Let $[E] = \{[p]: p \in E\}$ and  $[E_n] = \{[p]: p \in E_n\}$. The consistency of $E_n$ will be determined based on the distance between elements in $E_n$ and in $E$, measured by $d$.

In considering an asymptotic normality, we shall assume the population Fr\'{e}chet $\rho$-mean $[\mu] \in [E]$ is unique. In that case, the population Fr\'{e}chet $\rho$-mean set $E$ has precisely two elements, which will be handled by choosing one element of $E$. Since both $P$ and $P_0$ are not vector spaces, the usual normal distribution is not defined on such spaces. However, for any point $\mu \in P_0$, there exists a local chart $(\phi, U)$ that locally parameterize a small neighborhood $ A_\mu \subset P_0$ of $\mu \in A_\mu$, i.e., for some open set $U \subset \Real^{\nu}, \nu = m+K$, $\phi(p) \in U$ for $p \in A_\mu$.

In fact, $P_0 = S^m \times (0,\pi)^{K}$ is a smooth Riemannian manifold with intrinsic dimension $\nu = m +K$, since it is a product of $S^m$ and an open interval in Euclidean space. Then $P_0$ is naturally embedded into $\Real^{\nu+1}$, since $S^m \subset \Real^{m+1}$ and $(0,\pi)^K \subset \Real^K$. For a point $p = (\cv,\rv) \in P_0 \subset \Real^{\nu+1}$, let $T_pP_0$ be the affine $\nu$-dimensional hyperplain tangent to $P_0$ at $\mu$, which is the direct product of the spaces tangent to $S^m$ and $(0,\pi)^K$:
$$T_pP_0 = T_{(\cv,\rv)}P_0 = T_\cv S^m \times T_{\r}(0,\pi)^K.$$
Precisely, we consider the tangent space $T_cS^m$ of $S^m$ at $c \in S^m$ as the parametrization of the real tangent hyperplain of $S^m$ to $\Real^m$. That is, $T_\cv S^m \cong \Real^m$. Let $\cv  = \ev_{m+1}$, then the exponential map $\mbox{Exp}_\cv: \Real^m \to S^m$ is defined for $\vv_1 \in \Real^m$ by
$$\mbox{Exp}_\cv (\vv_1) = \left( \frac{\vv_1}{\norm{\vv_1}}\sin\norm{\vv_1} , \cos\norm{\vv_1} \right),$$
with a convention of $\mbox{Exp}_c(0) = c$.
Denote $\v = (\v_1,\v_2) \in T_pP_0$ for $\v_1 \in T_\cv S^m \cong \Real^m$, $\v_2 \in T_{\r}(0,\pi)^K = \Real^K$.

An example of the local chart $(\phi,U)$ is therefore the pair of the inverse exponential map and the tangent space. The exponential map at $p = (\cv ,{\r}) \in P_0$ is a map from the tangent space $T_{(\cv,\rv)}P_0$ at $p = (\cv,\rv)$ given by
$$ \mbox{exp}_{(\cv,\rv)}(\v) = ( \mbox{Exp}_\cv (\v_1), \r+\v_2 ),  \quad \v = (\v_1,\v_2) \in T_{(\cv,\rv)}P_0, $$
Therefore for any $p = (\cv ,{\r}) \in P_0$, there is an open set
$$U = \{\u \in \Real^m : \norm{\u} < \pi/2 \} \times (-\r,\pi-\r)^K \in T_pP_0$$
and $\phi = \mbox{exp}_{(\cv,\rv)}^{-1}$.
A version of central limit theorem will be developed on the $U$ or the tangent space at $\mu$, which is a local approximation of $P_0$.
The inverse of the exponential map will be used to map $p \in P_0$ to $T_{\mu}P_0$ for some $p$ near $\mu$.

In a local chart $(\phi,U)$ of $P_0$ near $\mu = \phi^{-1}(0)$, denote by $\nabla \rho^2\{x,\phi^{-1}(u)\}$ the gradient function of $\rho^2\{\cdot,\phi^{-1}(\cdot)\}$ in the second argument,
and denote by $H\rho^2\{x,\phi^{-1}(u)\}$ the Hessian matrix of the second order derivatives.
An assumption we require is that the random element $X = (X_{(1)},\ldots,X_{(n)})$ does not degenerate, i.e., for each $j$, the geodesic variance of $X_{(j)}$ is non-zero.
\begin{thm}\label{thm:consistencyandnormality}
Suppose the distribution of $X$ satisfies (A1) when $\rho_I$, $\rho_E$ or $\rho_N$ are used. The assumption (A1) is not required when $\rho_S$ is used. If the distribution of $X$ does not degenerate, then
\begin{enumerate}
  \item[i)] $E$ exists;
  \item[ii)] $E_n$ is strongly consistency with $E$ in the sense that
\begin{equation}\label{eq:consistency}
    \lim_{n\to\infty} \max_{p_n \in E_n} \min_{p\in E} d(p,p_n)  = 0 \ \mbox{ with probability 1},
\end{equation}
\end{enumerate}
and if additionally $[E] = \{[\mu]\}$ is unique,
then
\begin{enumerate}
\item[iii)] there exists a sequence $\mu_n \in E_n$ such that $\lim_{n\to\infty}d(\mu_n,\mu) = 0$ almost surely for a fixed $\mu \in E$, and that for any local chart $(\phi,U)$ near $\mu = \phi^{-1}(0)$,
there exist a $\nu \times \nu$ matrix $A_\phi$ and a $\nu \times \nu$ covariance matrix $\Sigma_\phi$  such that
 \begin{equation}\label{eq:asympnormality}
\sqrt{n} A_\phi \{\phi(\mu_n) -  \phi(\mu)\} \To N_\nu(0, \Sigma_\phi) \mbox{  in distribution as } n \to \infty.
\end{equation}
In particular, the matrices above are given by
$A_\phi = \mathbb{E}\{H\rho^2(X,\mu)\}$ and $\Sigma_\phi = \Cov\{\nabla \rho^2(X,\mu)\}$.
\end{enumerate}
\end{thm}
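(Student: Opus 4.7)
The statement has the classical three-part shape of a Fr\'echet mean limit theorem on a separable (pseudo-)metric space, so I would follow the Ziezold/Bhattacharya--Patrangenaru/Huckemann template. The space $P_0 = S^m \times (0,\pi)^K$ is a smooth Riemannian manifold of dimension $\nu = m+K$; Lemma \ref{lem:metric} upgrades $d$ to a (pseudo-)metric, and the local chart $\phi = \mbox{exp}_\mu^{-1}$ provides the Euclidean ambient in which to do asymptotics. The integrand $\rho^2(x,\cdot)$ is bounded uniformly on $P_0$ for each of the four losses, which gives free integrability and will reduce most analytic steps to dominated convergence.

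\textbf{Existence and consistency (i)--(ii).} First I show that $F(p):=\E\rho^2(X,p)$ is continuous on $P_0$. Pointwise continuity in $p$ holds everywhere for $\rho_S$, and under (A1) on a neighborhood of any $\cv_0$ for $\rho_I,\rho_E,\rho_N$; the uniform bound on $\rho^2$ promotes this to continuity of $F$ via dominated convergence. Non-degeneracy of $X$ forces $F$ to stay bounded away from its infimum along sequences with $r_j\downarrow 0$ or $r_j\uparrow\pi$ (a circle collapsing to a point cannot absorb positive-variance mass), so every minimizing sequence lies in some compact slab $S^m\times[\eps,\pi-\eps]^K$ and a minimizer exists. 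For (ii) I combine this compact-confinement argument with the uniform strong law $\sup_{p\in \Kc}|F_n(p)-F(p)|\to 0$ a.s., which follows from continuity and boundedness of $\rho^2$ together with the separability of $P_0$. The standard Ziezold conclusion then gives that every $p_n\in E_n$ eventually lies in the $d$-neighborhood of $E$, which is exactly \eqref{eq:consistency}.

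\textbf{CLT (iii).} Assume $[E]=\{[\mu]\}$. By the consistency in (ii), for large $n$ I can choose $\mu_n\in E_n$ in the same equivalence-class branch as a fixed $\mu\in E$, and write $u_n=\phi(\mu_n)$, $u=\phi(\mu)=0$ in the chart $(\phi,U)$. Under (A1) (or automatically for $\rho_S$) the function $u\mapsto\rho^2(x,\phi^{-1}(u))$ is $C^2$ on a neighborhood of $0$ for $P$-almost every $x$, with gradient and Hessian dominated by an integrable envelope. Since $\mu_n$ is an interior minimizer of $F_n$ in this chart, $\frac{1}{n}\sum_i\nabla\rho^2(X_i,\phi^{-1}(u_n))=0$ for $n$ large; a first-order Taylor expansion gives
\[
0 = \frac{1}{n}\sum_{i=1}^n\nabla\rho^2(X_i,\mu) + \Big\{\frac{1}{n}\sum_{i=1}^n H\rho^2(X_i,\widetilde u_n)\Big\}\,u_n,
\]
with $\widetilde u_n$ on the segment from $0$ to $u_n$. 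The first-order optimality of $\mu$ gives $\E\nabla\rho^2(X,\mu)=0$, so the classical multivariate CLT yields $\sqrt{n}\cdot n^{-1}\sum_i\nabla\rho^2(X_i,\mu)\To N_\nu(0,\Sigma_\phi)$ with $\Sigma_\phi=\Cov\{\nabla\rho^2(X,\mu)\}$. The uniform SLLN for Hessians (again from continuity plus envelope domination) combined with $u_n\to 0$ a.s.\ gives $n^{-1}\sum_i H\rho^2(X_i,\widetilde u_n)\to A_\phi:=\E H\rho^2(X,\mu)$ a.s. Non-degeneracy makes $A_\phi$ positive definite, and Slutsky delivers \eqref{eq:asympnormality}.

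\textbf{Main obstacle.} The technical heart is the smoothness/domination step for $\rho_I,\rho_E,\rho_N$, whose squared losses fail to be $C^2$ at $x=\pm\cv$. The whole CLT machinery — first-order optimality on the chart, the Taylor expansion, the uniform Hessian SLLN, and the invertibility of $A_\phi$ — rests on being able to differentiate under the expectation on a full neighborhood of $\mu$. I would verify this by showing that (A1) produces an $\eps$-gap in the support of $X$ around $\pm\cv_0$, so on a slightly smaller neighborhood $V$ of $\mu$ the quantities $\nabla\rho^2(x,\cdot)$ and $H\rho^2(x,\cdot)$ are uniformly bounded for $x$ in the effective support and $\cdot\in V$, yielding integrable envelopes. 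For $\rho_S$ this is automatic because $\rho_S^2(x,[\cv,r])=(\cv^\top x-\cos r)^2$ is a polynomial in the chart coordinates, which is exactly why (A1) is not needed in that case.
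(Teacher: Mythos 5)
Your proposal is correct in substance but follows a genuinely different, more self-contained route than the paper. For existence (i) both arguments are essentially the same: compactness of $S^m\times[0,\pi]^K$ plus continuity of $F$, with non-degeneracy excluding boundary minimizers (the paper does this explicitly by showing $F_j(\cv,0)=\E\{\arccos^2(X_{(j)}'\cv)\}$ strictly exceeds the variance of $\arccos(X_{(j)}'\cv)$, which is attainable at an interior $r_j$; your ``compact slab'' confinement is the same idea). For (ii) the paper does \emph{not} run a uniform-SLLN argmin argument: it cites Theorem A.3 of Huckemann (2011) to get Ziezold consistency from separability, continuity and compact support, and then proves a standalone equivalence theorem (its Theorem~\ref{thm:ZC-BPC}) showing that Ziezold consistency upgrades to the Bhattacharya--Patrangenaru form \eqref{eq:consistency} because $P_0$ is totally bounded. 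Your direct uniform strong law over the (totally bounded) parameter space reaches the same conclusion and is arguably more elementary, though you should note that passing from $F(p_n)\to\inf F$ to $d(p_n,E)\to 0$ itself needs the compactness/continuity step you used for existence. For (iii) the paper simply invokes Theorem 6 of Huckemann (2011, CLT paper), justified by compactness of $(S^m)^K$ and smoothness of $\rho^2$ in the second argument (guaranteed by (A1) for $\rho_I,\rho_E,\rho_N$ and automatic for $\rho_S$); you instead carry out the underlying M-estimation argument by hand (estimating equation, Taylor expansion, CLT for the score, uniform LLN for the Hessian, Slutsky). Your treatment of (A1) as producing an $\eps$-gap in the support, hence integrable envelopes for the gradient and Hessian, matches the paper's Remark exactly. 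The one soft spot in your version is the assertion that non-degeneracy makes $A_\phi$ positive definite: your Slutsky step needs $\sqrt{n}\,\phi(\mu_n)=O_p(1)$, hence eventual invertibility of the empirical Hessian, and the paper's non-degeneracy condition (non-zero geodesic variance of each marginal) is not obviously sufficient for this; the paper sidesteps the issue by stating the conclusion in the ``$A_\phi$ times the difference'' form of Huckemann's theorem and delegating the verification to the citation. If you keep your self-contained route, this invertibility claim deserves an explicit proof.
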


Note that the theorem requires a minimal assumption, namely the observations are i.i.d. and non-degenerate. This gives a much flexibility in modeling the error distribution $\epsilon_j$ across different $j$s.

To focus on the estimator $\hat{\cv}$ of the axis of rotation $c$, let $U_1 = \{\u \in \Real^m : \norm{\u} < \pi/2 \} \subset T_cS^m$ be the first $m$-coordinates of $U$, and $\phi_1$ be the first $m$ elements of $\phi$. Using the inverse exponential map and the tangent space, $\phi_1: S^m \to T_cS^m \cong \Real^m$, $\phi_1(x) = \mbox{Exp}^{-1}_c (x)$ is a mapping from a neighborhood of $c \in S^m$ to $U_1$.

The estimator $\hat{\cv}$ from (\ref{eq:estimation}) is found by minimizing sum of squared errors over  $n$ different samples and also over $K$ different directions. In the model used in \cite{Schulz2012},
\begin{equation}\label{eq:model1}
  X_j = R(\cv,\theta)\mu_j\oplus \epsilon_j \ \ (j=1,\ldots, K),
\end{equation}
the number of directions $K$ has a similar role as the sample size. The following corollary shows the variance of the estimator is smaller for larger number of $K$.

\begin{cor}\label{cor:CLT}
Suppose the conditions in Thoerem~\ref{thm:consistencyandnormality} are satisfied. In addition suppose that the marginal distribution of $\Xv$, $X^{(j)}$, satisfies
$X^{(j)}= R(\cv,\theta)\yv_j \oplus \epsilon_j$ for each $j = 1,\ldots,K$, as in (\ref{eq:model1}), and $\epsilon_j$'s are i.i.d. Then
for $\hat{\cv}_n$ such that $\mu_n = (\hat{\cv}_n,r_n)$,
there exist an $m\times m$ matrix $\bar{A}_{\phi_1}$ and an $m\times m$ covariance matrix $\bar\Sigma_{\phi_1}$ such that
 \begin{equation*}
\sqrt{nK} \bar{A}_{\phi_1} \{\phi_1(\hat{\cv}_n) -  \phi_1(c)\} \To N_\nu(0, \bar\Sigma_{\phi_1}) \mbox{  in distribution as } n \to \infty, K \to \infty.
\end{equation*}
\end{cor}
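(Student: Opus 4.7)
My plan is to reduce the statement to Theorem~\ref{thm:consistencyandnormality} applied with $K$ held fixed, and then let $K\to\infty$ by exploiting the i.i.d.\ structure of $\epsilon_j$ to analyse how the $\cv$-block of the asymptotic covariance depends on $K$. For each fixed $K$, Theorem~\ref{thm:consistencyandnormality}(iii) yields
$$
\sqrt{n}\{\phi(\hat\mu_n)-\phi(\mu)\}\To N_\nu(0,V_K),\qquad V_K=A_{\phi,K}^{-1}\Sigma_{\phi,K}A_{\phi,K}^{-1},
$$
as $n\to\infty$. Projecting to the first $m$ coordinates via $\phi_1$ and invoking the continuous mapping theorem gives $\sqrt{n}\{\phi_1(\hat\cv_n)-\phi_1(\cv)\}\To N_m(0,[V_K]_{cc})$. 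The remaining task is to show that $K[V_K]_{cc}$ has a finite limit and to identify it with $\bar A_{\phi_1}^{-1}\bar\Sigma_{\phi_1}\bar A_{\phi_1}^{-1}$.

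Next I exploit the block decomposition inherited from $\rho^2(X,[\cv,\rv])=K^{-1}\sum_{j=1}^K\rho_{S^m}^2(\xv_j,[\cv,r_j])$. Only the $j$th summand depends on $r_j$, so $A_{\phi,K}$ has $A_{rr}$ diagonal with $(j,j)$-entry $K^{-1}A_{r_j}^{(j)}$, cross block $A_{cr}$ with $j$th column $K^{-1}A_{cr_j}^{(j)}$, and $A_{cc}=K^{-1}\sum_j A_{cc}^{(j)}$, where the $(j)$-superscripts denote Hessian components of $\rho_{S^m}^2(X_j,[\cv,r_j])$. By independence of the $\epsilon_j$, the coordinate gradients $\nabla_{r_j}\rho^2$ are independent across $j$, so $\Sigma_{\phi,K}$ has the analogous structure with $\Sigma_{rr,jj}$ and $\Sigma_{cr,j}$ of order $1/K^2$ and $\Sigma_{cc}$ of order $1/K$. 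The Schur-complement formula for the top-left block of $A^{-1}\Sigma A^{-1}$ then gives
$$
[V_K]_{cc}=\bar A_K^{-1}\bar\Sigma_K\bar A_K^{-1},\quad \bar A_K=\frac{1}{K}\sum_{j=1}^K\tilde A^{(j)},\quad K\bar\Sigma_K=\frac{1}{K}\sum_{j=1}^K\tilde\Sigma^{(j)},
$$
with $\tilde A^{(j)}=A_{cc}^{(j)}-A_{cr_j}^{(j)}(A_{r_j}^{(j)})^{-1}A_{r_jc}^{(j)}$ the per-sample Schur complement and $\tilde\Sigma^{(j)}$ the analogous profiled gradient covariance.

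Letting $K\to\infty$, the i.i.d.\ $\epsilon_j$ assumption together with (A1) keeps $\{\tilde A^{(j)},\tilde\Sigma^{(j)}\}$ bounded, so $\bar A_K\to\bar A_{\phi_1}$ and $K\bar\Sigma_K\to\bar\Sigma_{\phi_1}$; hence $K[V_K]_{cc}\to\bar A_{\phi_1}^{-1}\bar\Sigma_{\phi_1}\bar A_{\phi_1}^{-1}$. A Slutsky argument along a diagonal sequence $K=K_n\to\infty$ slow enough that the fixed-$K$ CLT remainder is uniformly negligible then delivers
$$
\sqrt{nK_n}\,\bar A_{\phi_1}\{\phi_1(\hat\cv_n)-\phi_1(\cv)\}\To N_m(0,\bar\Sigma_{\phi_1}).
$$

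The main obstacle is this final combination: the iterated version (first $n$, then $K$) is immediate by composition, but a truly joint $(n,K)\to(\infty,\infty)$ statement needs either a uniform-in-$K$ Berry--Esseen-type control of the fixed-$K$ CLT of Theorem~\ref{thm:consistencyandnormality}, or a direct Lindeberg--Feller verification for the $(i,j)$ triangular array after profiling $\rv$ out via the per-sample Schur complements. A secondary point is that the limits $\bar A_K\to\bar A_{\phi_1}$ and $K\bar\Sigma_K\to\bar\Sigma_{\phi_1}$ presume sufficient regularity of the radii $\{r_j\}$ (e.g.\ asymptotic equidistribution in $(0,\pi)$, or the $r_j$'s being drawn from a common law), which one must read as implicit in model~(\ref{eq:model1}).
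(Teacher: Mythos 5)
Your proposal is correct and shares the paper's core strategy --- apply Theorem~\ref{thm:consistencyandnormality} for fixed $K$, use the independence across $j$ induced by the i.i.d.\ $\epsilon_j$ (via $\cv'R(\cv,\theta)\muv_j=\cv'\muv_j$) to get the block structure of $A_\phi$ and $\Sigma_\phi$, and then let $K\to\infty$ by a law-of-large-numbers argument over $j$ --- but it extracts the $\cv$-marginal differently. The paper stays with the premultiplied form $\sqrt{n}A_\phi\{\phi(\mu_n)-\phi(\mu)\}\To N_\nu(0,\Sigma_\phi)$, reads off the first $m$ rows, and rescales by $1/\sqrt{K}$, so that the cross term $\tfrac{\sqrt{n}}{\sqrt{K}}A_{12}\{\phi_2(\mu_n)-\phi_2(\mu)\}$ appears and is argued to vanish as $K\to\infty$; its $\bar A_{\phi_1}$ is thus the limit of the averaged $(\cv,\cv)$-Hessian block. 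You instead invert and profile out $\rv$ via Schur complements, so your $\bar A_{\phi_1}$ is the limit of averaged per-coordinate Schur complements $\tilde A^{(j)}$; this is a cleaner and more standard M-estimation computation, it makes the role of the diagonal $A_{22}$, $\Sigma_{22}$ blocks explicit, and either choice satisfies the corollary's existential claim. Two remarks. First, the ``main obstacle'' you identify --- upgrading to a joint $(n,K)\to(\infty,\infty)$ limit via a diagonal sequence with uniform-in-$K$ control --- is not needed: the remark immediately following the corollary states that the limits are applied sequentially, which is exactly the iterated limit you observe is immediate, so you may drop that discussion. Second, your caveat that the Ces\`aro limits $\bar A_K\to\bar A_{\phi_1}$ and $K\bar\Sigma_K\to\bar\Sigma_{\phi_1}$ require some regularity of the $\{r_j\}$ (or that the per-$j$ expectations be identical under the model) is a genuine point that the paper also passes over with an appeal to ``a law of large numbers''; flagging it as an implicit assumption of model~(\ref{eq:model1}) is the right call.
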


Note that the limits are applied sequentially. That is, the large sample assumption ($n\to\infty$) remains to be the major driver for the asymptotic normality.

Technical details can be found in Section~\ref{sec:technic}.

\begin{remark}
We have  used the notations and theories developed for Fr\'{e}chet mean on general pseudo-metric spaces \citep{Ziezold1977,Bhattacharya2003,Huckemann2011,Huckemann2011CLT}. These theories were developed mainly for shape spaces \citep{Dryden1998}, but the applications are much broader than the shape space, as this paper exemplifies. We also like to mention that there might be an alternative approach in investigation of asymptotic properties of the proposed estimator. For example, the work of \cite{Chang2001} concerns a general M-estimation for Stiefel manifolds, and our estimation procedure can be understood as an M-estimation in a direct product of Stiefel manifolds.
\end{remark}

%
%

\section{Technical details}\label{sec:technic}
\subsection{Proof of Lemma~\ref{lem:metric}}
\begin{proof}[Proof of Lemma~\ref{lem:metric}]
The non-negativity and symmetry of $d$ in both $P$ and $P_0$ are immediate. It is also easy to see that $d([p_1],[p_2]) = 0 $ if and only if $[p_1] = [p_2]$, but $d(p_1,p_2) = 0$ if $p_1=(\cv,\rv) \neq (-\cv, \pi-\r) = p_2$. The proof is completed by a triangle inequality which we provide for $(P_0,d)$ in the following.

First note that $\arccos(\cv_1'\cv_2)$ is the length of the shortest great circle segment connecting $\cv_1$ and $\cv_2$. For any $\cv_3 \in S^m$, the three points $\cv_1,\cv_2$ and $\cv_3$ and the sides given by the great circle segments form a spherical triangle on $S^m$. Therefore by the triangle inequality for spherical triangles \citep[p. 17]{Ramsay1995}, we have
\begin{equation}\label{eq:sphericaltriangleinequality}
    \arccos(\cv_1'\cv_2) \le \arccos(\cv_1'\cv_3) + \arccos(\cv_3'\cv_2).
\end{equation}

Without loss of generality, assume that $d_1(p_1,p_2) \le d_2(p_1,p_2)$.
For any $p_3 = (\cv_3,r^3) \in P_0$, we get, by (\ref{eq:sphericaltriangleinequality}),
\begin{eqnarray}\label{eq:triangleinequality1}
  d_1(p_1,p_2) &=& \sqrt{\arccos(\cv_1'\cv_2)^2 + \|r^1-r^3+r^3-r^2\|^2  }  \nonumber \\
             &\le&  \sqrt{\{\arccos(\cv_1'\cv_3) + \arccos(\cv_3'\cv_2)\}^2 + \{\|r^1-r^3\|+\|r^3-r^2\|\}^2  }  \nonumber \\
             &\le& \sqrt{\arccos(\cv_1'\cv_3)^2+ \|r^1-r^3\|^2}+ \sqrt{\arccos(\cv_3'\cv_2)^2+ \|r^3-r^2\|^2}  \nonumber  \\
             &= & d_1(p_1,p_3) + d_1(p_3,p_2),
\end{eqnarray}
and
 \begin{eqnarray}
  d_1(p_1,p_2) &\le&  d_2(p_1,p_2)  \nonumber \\
              &=& \sqrt{\arccos\{\cv_1'(-\cv_2)\}^2 + \|r^1- (\pi-r^2)\|^2  }  \nonumber \\
             &\le&  \sqrt{\{\arccos(\cv_1'\cv_3) + \arccos(-\cv_3'\cv_2)\}^2 + \{\|r^1-r^3\|+\|\pi-r^3-r^2\|\}^2  }  \nonumber \\
             &= & d_1(p_1,p_3) + d_2(p_3,p_2). \label{eq:triangleinequality2}
\end{eqnarray}
Similarly,
\begin{eqnarray}
   d_1(p_1,p_2) &\le&  d_2(p_1,p_3) + d_1(p_3,p_2), \nonumber \\
  d_1(p_1,p_2) &\le&  d_2(p_1,p_3) + d_2(p_3,p_2). \label{eq:triangleinequality4}
\end{eqnarray}

Combining (\ref{eq:triangleinequality1}-\ref{eq:triangleinequality4}),
$$d(p_1,p_2) = d_1(p_1,p_2) \le \min\{d_1(p_1,p_3), d_2(p_1,p_3)\} +
                                \min\{d_1(p_3,p_2), d_2(p_3,p_2)\},$$
which proves the triangle inequality.
\end{proof}

\subsection{Proof of the main results}

\begin{proof}[Proof of Thoerem~\ref{thm:consistencyandnormality}]
We first prove that i) $E$ exists, in the special case of $\rho = \rho_I$. The proof for the cases $\rho = \rho_E, \rho_S, \rho_N$ is similar, and is omitted.
Let
$$F(\cv,r) = \mathbb{E}[\rho^2\{X, (\cv,\rv)\}]  = \frac{1}{K}\sum_{j=1}^K \mathbb{E}\{\arccos(X_{(j)}'\cv)-r_j\}^2 = \sum_{j=1}^K F_j(\cv,r_j),$$
where $X_{(j)} \in S^m$ the $j$th marginal random element of
$X \in (S^m)^K$ and $\r = (r_1,\ldots, r_K)$.
Then Fr\'{e}chet $\rho$-means of $X$ is
$$E = \{(\cv,\rv) \in P_0 | F(\cv,\rv) = \inf_{\cv_0,\r_0} F(\cv_0,\r_0)\}.$$
Consider the closer of $P_0$, $\overline{P_0} = S^m \times [0,\pi]$, which is compact. Since $F$ is continuous, there exists a $(\cv_1,\r^1) \in \overline{P_0}$ such that $F(\cv_1,\r^1) = \inf_{\cv_0,\r_0} F(\cv_0,\r_0)\}$.
The set $E$ is nonempty if such $\r^1$ satisfies $r^1_j \in (0,\pi)$ for all $j$. For each $j$, since $F_j(\cv,r_j) = F(-\cv,\pi-r_j)$, $r_j >0$ if and only if $r_j < \pi$.
Since the distribution of $X$ does not degenerate, for any $\cv \in S^m$ and for any $1 \le j \le K$, there exists $\epsilon>0$ such that $P(\arccos(X_{(j)}'\cv)>\epsilon)>0$, which leads to
\begin{align*}
  \mathbb{E}(\arccos(X_{(j)}'\cv))
 & = \mathbb{E}(\arccos(X_{(j)}'\cv)1_{0\le \arccos(X_{(j)}'\cv)\le\epsilon}) + \mathbb{E}(\arccos(X_{(j)}'\cv)1_{\arccos(X_{(j)}'\cv)> \epsilon}) \\
& > \epsilon P(\arccos(X_{(j)}'\cv)>\epsilon) > 0.
\end{align*}
For any $\cv \in S^m$, $F_j(\cv,0) = \mathbb{E}\{\arccos^2(X_{(j)}'\cv)\} > \mathbb{E}[\{\arccos(X_{(j)}'\cv)- \mathbb{E}(\arccos(X_{(j)}'\cv))\}^2]$. Therefore,
 $F(\cv,0) = \sum_{j=1}^K F_j(\cv,0) > \inf_{\cv_0,\r_0} F(\cv_0,\r_0)$, from which we conclude that $E$ is nonempty.

A proof for strong consistency (ii) is based on the arguments in \cite{Huckemann2011}, and we work with the following two general definitions.

\begin{defn}
Let $E_n$ be a random closed set and $E$ be a deterministic closed set in $(P_0,d)$. We then say
\begin{description}
  \item[(ZC)] $E_n$ is a strong consistent estimator of $E$ in the sense of Ziezold \citep{Ziezold1977} if
$$ \bigcap_{n=1}^\infty \overline{\bigcup_{k=n}^\infty E_k} \subset E \quad \mbox{almost surely.}$$
  \item[(BPC)] $E_n$ is a strong consistent estimator of $E$ in the sense of Bhattacharya and Patrangenaru \citep{Bhattacharya2003} if $E$ is non-empty and
   for every $\epsilon>0$, there is a sufficiently large $n$ such that
   $$ \bigcup_{k=n}^\infty E_k \subset \{ p \in P_0: d(E,p) \le \epsilon \}  \quad \mbox{almost surely.}$$
\end{description}
\end{defn}

If $E_n$ is a Ziezold-consistent (ZC) estimator of $E$, then for any sequence $\mu_n \in E_n$ that converges to some $p \in P_0$, then such $p$ is in $E$ with probability 1.
If Bhattacharya--Patrangenaru consistency (BPC) holds for $E_n$, then for sufficiently large $n$ all elements of $E_n$  are in the $\epsilon$-neighborhood of $E$ almost surely.

\begin{remark}
The BPC is a stronger property than ZC as we shall see in the following theorem. As a simple example, suppose each $E$ and $E_n$ have only one element denoted by $\mu$ and $\mu_n$. Then BPC holds if and only if $\mu_n \to \mu$ almost surely, which coincides to the usual notion of strong consistency. On the other hand, let $\mu_n = n$ diverges, then $\bigcap_{n=1}^\infty \overline{\bigcup_{k=n}^\infty E_k} = \emptyset$ so the Ziezold consistency holds, which is somewhat less meaningful. The following theorem states that BPC and ZC are the same when such diverging $E_n$ is prevented.
\end{remark}

\begin{thm}\label{thm:ZC-BPC}
Suppose that $E$ is non-empty and $d$ is a quasi-metric.

1. BPC implies ZC.

2. ZC implies BPC if a) $P_0$ is totally bounded or if b)
\begin{enumerate}
  \item[i)] $E(\rho(X,p)^2) <\infty $ for all $p \in P_0$;
  \item[ii)] $(P_0,d)$ satisfies the Heine-Borel property;
  \item[iii)] $\rho(x,p)$ is growing at the extremes of $P$. More precisely, if $d(p,p_n ) \to \infty$ for some $p, p_n \in P_0$, then for all $x \in Q$ such that $\rho(x,p)< B <\infty$, there is an increasing sequence $M_n \to \infty$ satisfying $\rho(x,p_n) \ge M_n$.
\end{enumerate}
\end{thm}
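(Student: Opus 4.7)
The plan is to treat the two implications separately: Direction~1 (BPC $\Rightarrow$ ZC) reduces to bookkeeping from the two definitions, while Direction~2 (ZC $\Rightarrow$ BPC) requires a compactness argument that is delivered differently under hypothesis (a) versus hypothesis (b).

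For Direction~1, I would fix $\omega$ in a probability-one event on which BPC holds and pick any $p \in \bigcap_{n=1}^\infty \overline{\bigcup_{k \ge n} E_k(\omega)}$. Given $\epsilon > 0$, BPC furnishes $N$ such that every point of $\bigcup_{k \ge N} E_k(\omega)$ lies within $\epsilon$ of $E$; simultaneously, the definition of closure produces some $k \ge N$ and $q \in E_k(\omega)$ with $d(p,q) < \epsilon$. The (quasi-metric) triangle inequality then yields $d(p,E) < 2\epsilon$, and since $\epsilon$ was arbitrary and $E$ is closed, $p \in E$, which is exactly ZC.

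For Direction~2, I would proceed by contradiction. If BPC fails, there exist $\epsilon > 0$, indices $k_n \uparrow \infty$, and $p_n \in E_{k_n}$ with $d(p_n, E) > \epsilon$ on a positive-probability event. The strategy is to extract a convergent subsequence $p_{n'} \to p^*$ in $P_0$; such a $p^*$ automatically belongs to $\bigcap_m \overline{\bigcup_{k \ge m} E_k}$, so ZC forces $p^* \in E$, contradicting $d(p^*, E) \ge \epsilon$ obtained by continuity of $d(\cdot,E)$. Under (a), total boundedness (together with completeness of $P_0$) directly delivers a Cauchy, hence convergent, subsequence. Under (b) the substantive task is to verify that $(p_n)$ stays bounded, after which the Heine--Borel property (ii) makes its closure compact and the needed subsequence is extracted. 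To establish boundedness I would fix $\bar p \in E$ and assume for contradiction that $d(\bar p, p_n) \to \infty$ along a subsequence. The minimizing property of $p_n \in E_{k_n}$ gives
\begin{equation*}
\frac{1}{k_n}\sum_{j=1}^{k_n} \rho^2(X_j, p_n) \;\le\; \frac{1}{k_n}\sum_{j=1}^{k_n} \rho^2(X_j, \bar p),
\end{equation*}
whose right side converges almost surely to $\E[\rho^2(X,\bar p)] < \infty$ by the strong law and~(i). On the other hand, (iii) supplies, for each $x$ with $\rho(x,\bar p) < \infty$, an increasing lower bound $\rho(x, p_n) \ge M_n \to \infty$; a Fatou argument on the empirical measures then forces the left side to diverge, the desired contradiction.

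The main obstacle is precisely this boundedness step under~(b), since $p_n$ is data-dependent and the growth bound $M_n$ in~(iii) is allowed to depend on the point $x$. I would carry it out on the full-probability event where the strong law holds at $\bar p$ and each $X_j$ satisfies $\rho(X_j,\bar p) < \infty$, and apply Fatou's lemma to move the \emph{liminf} inside the empirical mean. Aside from this, the only remaining subtlety is confirming that the limit $p^*$ extracted under either~(a) or~(b) genuinely belongs to $\bigcap_m \overline{\bigcup_{k \ge m} E_k}$; this is immediate because each tail $\bigcup_{k \ge m} E_k$ contains $p_{n'}$ for all sufficiently large $n'$, so $p^*$ lies in its closure for every $m$.
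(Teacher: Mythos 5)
Your Direction~1 argument is the paper's argument (pass from a point of $\bigcap_n\overline{\bigcup_{k\ge n}E_k}$ to a nearby $q\in E_k$ with $k$ large, then use the $\epsilon$-neighborhood property of BPC and the triangle inequality), and your skeleton for Direction~2 --- contradiction, extraction of an accumulation point of the offending sequence, appeal to ZC via continuity of $d(\cdot,E)$ --- also matches the paper. The genuine gap is the boundedness step under hypothesis (b), which you correctly identify as the crux but do not actually close. Fatou's lemma requires a \emph{fixed} measure, whereas in $\frac{1}{k_n}\sum_{j\le k_n}\rho^2(X_j,p_n)$ both the integrand and the empirical measure vary with $n$, so there is no legitimate way to ``move the liminf inside the empirical mean.'' Moreover, with your reading of (iii) --- a rate $M_n(x)$ allowed to depend on $x$ --- the conclusion is simply false as a deduction: the average of nonnegative terms is bounded below only by $\min_{j\le k_n}M_n(X_j)^2$, and since the minimum runs over an ever-growing collection of sample points there is no reason for it to diverge (freshly arriving observations may have arbitrarily slow rates). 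Pointwise divergence of $\rho(\cdot,p_n)$ does not force divergence of the empirical Fr\'{e}chet function.

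What closes the step, and what the paper does, is to manufacture \emph{uniformity over a set of positive probability}. Condition (i) yields $p_0\in P_0$ and $C>0$ with $P\{\rho(X,p_0)<C\}>0$ (otherwise $\E\{\rho^2(X,p_0)\}\ge C^2$ for every $C$, contradicting integrability); condition (iii) is then read as supplying a single sequence $M_n\to\infty$ valid for \emph{all} $x$ in the ball $\{x:\rho(x,p_0)<C\}$. The SLLN gives that a proportion $k(n)/n\to P\{\rho(X,p_0)<C\}>0$ of the first $n$ observations lies in that ball, whence
\[
\frac{1}{n}\sum_{i=1}^n\rho^2(X_i,p_n)\;\ge\;\frac{k(n)}{n}\,M_n^2\;\longrightarrow\;\infty,
\]
contradicting $F_n(p_n)\le F_n(\bar p)\to\E\{\rho^2(X,\bar p)\}<\infty$. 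You should rebuild your step (b) on this uniform version; as written, the argument does not go through. A smaller, shared issue: under (a) you invoke completeness of $P_0$, which is not among the hypotheses (total boundedness alone produces only a Cauchy subsequence); the paper's own treatment of (a) has the same lacuna, so this is worth noting but is not a defect specific to your write-up.
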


\begin{proof}[Proof of Theorem \ref{thm:ZC-BPC}]
1. Let $B_n = \cup_{k=n}^\infty E_k$. Then since $E_n$ is BP-consistent, given any $\epsilon >0$, there is  sufficiently large $n$ such that $B_n$ is in the $\epsilon$-neighborhood of $E$, and thus for any $b_n \in B_n$, $d(b_n, E) \le \epsilon$. Let $B = \cap_{n=1}^\infty \overline{B_n} $. By the definition of the closure, For any $b \in B$, $b \in \overline{B_n}$ for all $n$ and  there exists a sequence $b_n \in B_n$ such that $b_n \to b$ as $n \to \infty$.

Given $b \in B$, for any  arbitrarily small  $\epsilon>0$, one can choose $b_N \in B_N$ for sufficiently large $N$ satisfying $d(b_N,E) \le \epsilon$ and $d(b,b_N) < \epsilon$. Therefore,
$ d(b,E) = \inf_{e\in E} d(b,e) \le d(b,b_N) + \inf_{e\in E} d(b_N,e) \le 2 \epsilon$. Since $E$ is closed, letting $\epsilon \to 0$ shows that $b \in E$, thus leads to Ziezold Consistency.

2. Consider a sequence $p_n \in E_n$ determined by
$$d(p_n,E) = \max_{p \in E_n} d(p,E) = r_n.$$
Then either $r_n \to 0$ (satisfying BPC) or $r_n$ does not converge to 0. If $r_n \nrightarrow 0$, there is a sequence $n(k)$ such that $r_{n(k)} \ge r_0 > 0$, and if there is an accumulation point of $p_{n(k)}$, the accumulation point has a positive distance to $E$, which is a contradiction to ZC. So, whenever $r_n \nrightarrow 0$, there should be no accumulation point. We will rule out this case by contradiction for each of conditions (a) or (b).

a) If $P_0$ is totally bounded, for any small $\epsilon>0$ and a finite cover $\{A_j\}$ such that $P_0 \subset \cup A_j $ and $\mbox{diam}(A_j) = \epsilon$, there is only finitely many $p_{n(k)}$ in each $A_j$. This is a contradiction to the existence of the subsequence. Thus $r_n \to 0$. Bhattacharya and Patrangenaru have noted a similar observation for a less general definition of Fr\'{e}chet mean \citep{Bhattacharya2003}.

b) Since $(P,d)$ satisfies the Heine-Borel property, $r_n = d(p_n,E)$ is unbounded, because otherwise $p_{n(k)}$ is bounded and there exists an accumulation point of $p_{n(k)}$. Therefore, $\limsup r_{n} = \infty$, and let $l_n$ be the subsequence satisfying $\lim r_{l_n} = \infty$.

Now assume $\mathbb{E}\{\rho(X,p)^2\} <\infty $ for all $p \in P_0$. Then there exist $p_0 \in P_0, C >0$ such that $P\{\rho(X,p_0) < C\}0 > 0$.
Suppose otherwise that for any $p_0$ and $C$, $P\{\rho(X,p_0) < C\} = 0$, which is the same as  $P\{\rho(X,p_0) \ge C\} = 1$. Then
$\mathbb{E}\{\rho(X,p_0)^2\} \ge C^2$, which is a contradiction $\mathbb{E}\{\rho(X,p_0)^2\} <\infty $ since $C$ is arbitrarily large.

By above argument, we choose $p_0 \in P_0$ and $C>0$ such that $P\{\rho(X,p_0) < C\} > 0$. Then since $X_i$'s are i.i.d., there is a subsequence $k(n)$ of $n$, and for each $n$ there is a subsequence $j_{1},\ldots,j_{k(n)}$ of ${1,\ldots, n}$ satisfying $\rho(X_{j_i},p_0) <C$ for all $i = 1,\ldots, {k(n)}$, a.s., and
$$\frac{k(n)}{n} \to P\{\rho(X,p_0)<C\} > 0.$$

Then by assumption (iii),  for $n \in \{l_i: i =1,\ldots, n \}$,
$$
\ell_n = F_n(p_n)  = \frac{1}{n}\sum_{i=1}^n \rho(X_i,p_n)^2 \ge \frac{1}{n}\sum_{i=1}^{k(n)} \rho(X_{j_i},p_n)^2 >  \frac{k(n)}{n} M_n^2
$$
so $ \limsup {\ell_n} = \infty$ almost surely.
Meanwhile, for any $p \in P_0$, a strong law of large numbers yields $\ell_n \le F_n(p) = \frac{1}{n}\sum_{i=1}^n \rho(X_i,p)^2 \to F(p) = \mathbb{E}\{\rho(X,p)^2\} < \infty $ almost surely, which is a contradiction.
\end{proof}

The Ziezold consistency of $E_n$ is then shown by an application of  Theorem A.3 of  \cite{Huckemann2011}. In particular, since the support of $X$, $(S^m)^{K}$, is compact, $\rho$ is continuous and $(P_0,d)$ is a separable metric space. These conditions satisfy the assumptions of Theorem A.3 of  \cite{Huckemann2011}, which gives the Ziezold consistency of $E_n$. Moreover, since $P$ is totally bounded, by Theorem \ref{thm:ZC-BPC}, we get (\ref{eq:consistency}).

The asymptotic normality (\ref{eq:asympnormality})  is an application of Theorem 6 of \cite[p.444]{Huckemann2011CLT}, the assumptions of which are justified provided that $(S^m)^K$ is compact, and $\rho^2$ is smooth in terms of the second argument.
\end{proof}

\begin{proof}[Proof of Corollary~\ref{cor:CLT}]
We use notations that $\phi(\mu) = (\phi_1(c), \phi_2(r))$, for $\mu = (\cv,r)$, and that $\phi^{-1}_2(x_{m+1},\ldots,x_{m+K}) = [r_1(x_{m+1}), \ldots, r_K(x_{m+K})]$. Then,
following the definition (\ref{eq:defofrho}) of $\rho$,  write
$$
\nabla\rho^2(X,\phi^{-1}(x))
    = \sum_{j=1}^K \nabla d_g^2[ \delta\{ \phi_1^{-1}(x_1,\ldots,x_m), r_j(x_{m+j})\}, X^{(j)}]
    = \begin{pmatrix}
        \sum_{j=1}^K g_1^j(x,X^{(j)}) \\
        \vdots \\
        \sum_{j=1}^K g_m^j(x,X^{(j)})  \\
        g_{m+1}^1(x, X^{(1)}) \\
        \vdots\\
        g_{m+K}^K(x, X^{(K)}) \\
      \end{pmatrix},
$$
where $g_i^j(x,X^{(j)}) = \frac{d}{dx_{i}} d_g^2[ \delta\{ \phi_1^{-1}(x_1,\ldots,x_m), r_j(x_{m+j})\}, X^{(j)}]$ for $i=1,\ldots,m+K, j = 1,\ldots, K$.
In particular, when $x = 0$,
$$g_i^j(0,X^{(j)}) = \left\{
                       \begin{array}{ll}
                         2 \{\arccos(\cv' R(c,\theta) \mu_j \oplus \epsilon_j) - r_j\} \frac{d}{dx_i} \arccos(\cv' R(c,\theta) \mu_j \oplus \epsilon_j), & {i = 1,\ldots, m;} \\
                         2 \{\arccos(\cv' R(c,\theta) \mu_j \oplus \epsilon_j) - r_j\}\frac{d}{dx_i} r_j, & {i =m+1,\ldots,m+K.}
                       \end{array}
                     \right.
$$
Since $\cv' R(c,\theta) \mu_j = \cv'\mu_j$, the $g_i^j(0,X^{(j)})$ depends only on $\epsilon_j$, but not on $\theta$. This and the fact that $\mu \oplus e_j$'s are i.i.d. lead to that $\{g_i^j(0,X^{(j)}) \}_{j=1\ldots,K}$ are mutually independent.
Since $\E \{ \nabla\rho^2(X,\mu)\} = 0$, we have
\begin{equation}\label{eq:cor-grad}
\Sigma_\phi = \Cov \{ \nabla\rho^2(X,\mu)\} = \E [\nabla\rho^2(X,\mu) \{\nabla\rho^2(X,\mu)\}']
 =  \begin{pmatrix}
    K \Sigma_{\phi_1} & \Sigma_{12}  \\
      \Sigma_{12}' & \Sigma_{22}  \\
    \end{pmatrix},
\end{equation}
where the $(i,l)$th element of the $m \times m$ matrix $\Sigma_{\phi_1}$ is $\E K^{-1} \sum_{j=1}^K g_i(0, X^{(j)}) g_l(0, X^{(j)})$,
       the $(i,l)$th element of $\Sigma_{12}$ is
       $\E g_i(0, X^{(l)}) g_{m+l}(0, X^{(l)})$
        for $ i = 1,\ldots,m ,l = 1,\ldots,K$,
        and $\Sigma_{22} = \diag_{l=1,\ldots,K} \E \{g_{m+l}(0, X^{(l)})\}^2$,
        where $\diag_{l=1,\ldots,K} a_l$ is a $K\times K$ diagonal matrix with $a_l$ being the $l$th diagonal entry.
Note that since $\rho$ is smooth, the first and second moments of $(g_i(0, X^{(j)})g_l(0, X^{(j)})$ exist for each $j = 1,\ldots, K$. Thus, by a law of large numbers, there exists a $\bar\Sigma_{\phi_1}$ such that $\Sigma_{\phi_1} \to \bar\Sigma_{\phi_1}$ as $K \to \infty$.

For the Hessian, we have
\begin{equation}\label{eq:cor-Hessian}
\E \{ H\rho^2(X,\mu)\} = \E \{ H\rho^2(X,\phi^{1}(0) )\}
 =  \begin{pmatrix}
      K A_{\phi_1}  & A_{12}  \\
      A_{12}' & A_{22}  \\
    \end{pmatrix},
\end{equation}
where  $g_{il}(x,X^{(j)})  = \frac{d^2}{dx_{l}dx_{i}}  d_g^2[ \delta\{ \phi_1^{-1}(x_1,\ldots,x_m), r_j(x_{m+j})\},  X^{(j)}]$ for $i,l=1,\ldots,m+K, j = 1,\ldots, K$,
\begin{align*}
A_{\phi_1} &= (\E K^{-1} \sum_{j=1}^K g_{il}(0,X^{(j)}))_{i,l = 1,\ldots,m},\\
    A_{12} &= (\E g_{i,m+l}(0,X^{(l)}))_{i,l= 1,\ldots,m}, \\
    A_{22} &= \diag_{l = 1,\ldots,K} \E g_{m+l,m+l}(0,X^{(l)}).
\end{align*}
Similar to the gradient case, a law of large numbers ensures that there exists a  $\bar{A}_{\phi_1}$ such that $A_{\phi_1} \to \bar{A}_{\phi_1}$ as $K \to \infty$.
The result of Theorem~\ref{thm:consistencyandnormality} leads to
$$
\sqrt{n}
\begin{pmatrix}
      K A_{\phi_1}  & A_{12}  \\
      A_{12}' & A_{22}  \\
    \end{pmatrix}
\left( \begin{pmatrix}
      \phi_1(\mu_n)    \\
      \phi_2(\mu_n)    \\
    \end{pmatrix}
    -  \begin{pmatrix}
      \phi_1(\mu)    \\
      \phi_2(\mu)    \\
    \end{pmatrix}
       \right) \to N_\nu (0, \Sigma_\phi)
$$
in distribution as $n \to \infty$, for any $K$.
Taking the marginal distribution for the first $m$ dimensions, and multiplying $1/\sqrt{K}$, we get,
$$\sqrt{nK} A_{\phi_1} ( \phi_1(\hat{\cv}_n) -  \phi_1(c) ) + \frac{\sqrt{n}}{\sqrt{K}}A_{12}(\phi_2(\mu_n)-\phi_2(\mu)  ) \to N_\nu (0, \Sigma_{\phi_1}).$$
The proof is completed by letting $K \to \infty$ and by noting that the second term in the left hand side is $O_p(\sqrt{K})$.

\end{proof}

\bibliographystyle{asa}
\bibliography{PNS}

\end{document}